\author[P.~Leonetti]{Paolo Leonetti}
\address{Universit\`a ``Luigi Bocconi''\\Department of Statistics\\Milan, Italy}
\email{leonetti.paolo@gmail.com}
\urladdr{\url{http://orcid.org/0000-0001-7819-5301}}
\author[C.~Sanna]{Carlo Sanna}
\address{Universit\`a degli Studi di Torino\\Department of Mathematics\\Turin, Italy}
\email{carlo.sanna.dev@gmail.com}
\urladdr{\url{http://orcid.org/0000-0002-2111-7596}}
\keywords{Stirling number of the first kind, harmonic numbers, $p$-adic valuation.}
\subjclass[2010]{Primary: 11B73; Secondary: 11B50, 11A51.}
\title{On the $p$-adic valuation of Stirling~numbers\\ of the first kind}
\newtheorem{thm}{Theorem}[section]
\newtheorem{lem}[thm]{Lemma}
\newtheorem{cor}[thm]{Corollary}
\newtheorem{conj}{Conjecture}[section]
\theoremstyle{definition}
\def\free{\operatorname{free}}
\begin{document}

\maketitle
\thispagestyle{empty}

\begin{abstract}
For all integers $n \geq k \geq 1$, define $H(n,k) := \sum 1 / (i_1 \cdots i_k)$, where the sum is extended over all positive integers $i_1 < \cdots < i_k \leq n$. 
These quantities are closely related to the Stirling numbers of the first kind by the identity $H(n,k) = s(n + 1, k + 1) / n!$.
Motivated by the works of Erd\H{o}s--Niven and Chen--Tang, we study the $p$-adic valuation of $H(n,k)$.
In particular, for any prime number $p$, integer $k \geq 2$, and $x \geq (k-1)p$, we prove that $\nu_p(H(n,k)) < -(k - 1)(\log_p(n/(k - 1)) - 1)$ for all positive integers $n \in [(k-1)p, x]$ whose base $p$ representations start with the base $p$ representation of $k - 1$, but at most $3x^{0.835}$ exceptions. 
We also generalize a result of Lengyel by giving a description of $\nu_2(H(n,2))$ in terms of an infinite binary sequence.
\end{abstract}

\section{Introduction}

It is well known that the $n$-th harmonic number $H_n := 1+\frac{1}{2}+\cdots+\frac{1}{n}$ is not an integer whenever $n \geq 2$.
Indeed, this result has been generalized in several ways (see, e.g.,~\cite{MR2385421, zbMATH03009590, zbMATH02614460}).
In particular, given integers $n \geq k \geq 1$, Erd\H{o}s and Niven~\cite{MR0015401} proved that
\begin{equation*}\label{eq:Hnk}
H(n, k) := \sum_{1 \leq i_1 < \cdots < i_k \leq n} \frac1{i_1 \cdots i_k}
\end{equation*}
is an integer only for finitely many $n$ and $k$.
Precisely, Chen and Tang~\cite{MR2999589} showed that $H(1,1)$ and $H(3,2)$ are the only integral values. (See also~\cite{MR3267180} for a generalization to arithmetic progressions.)

A crucial step in both the proofs of Erd\H{o}s--Niven and Chen--Tang's results consists in showing that, when $n$ and $k$ are in an appropriate range, for some prime number $p$ the $p$-adic valuation of $H(n,k)$ is negative, so that $H(n,k)$ cannot be an integer.

Moreover, a study of the $p$-adic valuation of the harmonic numbers was initiated by Eswarathasan and Levine~\cite{MR1129989}.
They conjectured that for any prime number $p$ the set $\mathcal{J}_p$ of all positive integers $n$ such that $\nu_p(H_n) > 0$ is finite.
Although Boyd~\cite{MR1341721} gave a probabilistic model predicting that $\#\mathcal{J}_p = O(p^2(\log \log p)^{2 + \varepsilon})$, for any $\varepsilon > 0$, and Sanna~\cite{MR3486261} proved that $\mathcal{J}_p$ has asymptotic density zero, the conjecture is still open.
Another result of Sanna~\cite{MR3486261} is that $\nu_p(H_n) = -\lfloor \log_p n\rfloor$ for any $n$ in a subset $\mathcal{S}_p$ of the positive integers with logarithmic density greater than $0.273$.

In this paper, we study the $p$-adic valuation of $H(n,k)$.
Let $s(n,k)$ denotes an unsigned Stirling number of the first kind~\cite[\S6.1]{MR1397498}, i.e., $s(n,k)$ is the number of permutations of $\{1,\ldots,n\}$ with exactly $k$ disjoint cycles.
Then $H(n,k)$ and $s(n,k)$ are related by the following easy identity.

\begin{lem}\label{lem:Hnksnk}
For all integers $n \geq k \geq 1$, we have $H(n,k) = s(n+1,k+1)/n!$.
\end{lem}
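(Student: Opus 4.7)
The plan is to compare two expansions of the same polynomial. Start from the classical generating function for unsigned Stirling numbers of the first kind,
\[
x(x+1)(x+2)\cdots(x+n) = \sum_{k=0}^{n+1} s(n+1,k)\, x^{k},
\]
which reflects the fact that $s(n+1,k)$ counts permutations of $\{1,\dots,n+1\}$ with exactly $k$ cycles. Dividing through by $x$ gives
\[
(x+1)(x+2)\cdots(x+n) = \sum_{k=0}^{n} s(n+1,k+1)\, x^{k}.
\]

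Next I would rewrite the left-hand side by pulling out the factor $n!$:
\[
(x+1)(x+2)\cdots(x+n) = n!\,\prod_{i=1}^{n}\left(1+\frac{x}{i}\right).
\]
Expanding the product symmetrically, the coefficient of $x^{k}$ is precisely the elementary symmetric function $\sum_{1 \leq i_{1} < \cdots < i_{k} \leq n} 1/(i_{1}\cdots i_{k}) = H(n,k)$. Hence
\[
(x+1)(x+2)\cdots(x+n) = n! \sum_{k=0}^{n} H(n,k)\, x^{k}.
\]

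Finally I would equate the two expressions and read off the coefficient of $x^{k}$ for $1 \leq k \leq n$, obtaining $s(n+1,k+1) = n!\cdot H(n,k)$, which is the claim. There is no real obstacle here; the only points to be careful about are the sign convention (unsigned Stirling numbers, so that the generating identity has no alternating signs) and keeping the index shift $k \mapsto k+1$ straight when dividing out the $x$ factor. An alternative route would be to induct on $n$ using the recurrences $s(n+1,k+1) = n\, s(n,k+1) + s(n,k)$ and $n\,H(n,k) = n\,H(n-1,k) + H(n-1,k-1)$, but the generating-function proof is cleaner.
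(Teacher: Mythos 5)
Your proof is correct and is essentially identical to the paper's: both start from the generating identity $\prod_{i=1}^{n}(X+i)=\sum_{k=0}^{n}s(n+1,k+1)X^{k}$ (using $s(n+1,0)=0$), factor out $n!$ to recognize the elementary symmetric functions $H(n,k)$, and compare coefficients.
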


In light of Lemma~\ref{lem:Hnksnk}, and since the $p$-adic valuation of the factorial is given by the formula~\cite[p.~517, 4.24]{MR1397498}
\begin{equation*}
\nu_p(n!) = \frac{n - s_p(n)}{p - 1} ,
\end{equation*}
where $s_p(n)$ is the sum of digits of the base $p$ representation of $n$, it follows that 
\begin{equation}\label{equ:Hnksn1k1}
\nu_p(H(n,k)) = \nu_p(s(n+1,k+1)) - \frac{n - s_p(n)}{p - 1} ,
\end{equation}
hence the study of $\nu_p(H(n,k))$ is equivalent to the study of $\nu_p(s(n + 1, k + 1))$.
That explains the title of this paper.

In this regard, $p$-adic valuations of sequences with combinatorial meanings have been studied by several authors (see, e.g., \cite{MR1667454, MR1337793, MR3190001, MR3275869, MR2333145, San16}).
In particular, the $p$-adic valuation of Stirling numbers of the second kind have been extensively studied~\cite{MR2410117, MR2192240, MR2926560, MR1285745, MR2721543}.
On the other hand, very few seems to be known about the $p$-adic valuation of Stirling numbers of the first kind. 
Indeed, up to our knowledge, the only systematic work on this topic is due to Lengyel~\cite{MR3283168}.
Among several results, he showed (see the proof of \cite[Theorem~1.2]{MR3283168}) that
\begin{equation}\label{eq:Lengyel_lower_bound}
\nu_p(H(n,k)) > -k \log_p n + O_k(1) ,
\end{equation}
for all prime numbers $p$ and all integers $n \geq k \geq 1$.

The main aim of this article is to provide an upper bound for $\nu_p(H(n,k))$.
In this respect, we believe that inequality (\ref{eq:Lengyel_lower_bound}) is nearly optimal, and our Theorem~\ref{thm:ubound} confirms this in the special case when the base $p$ representation of $n$ starts with the base $p$ representation of $k - 1$.
We also formulate the following:
\begin{conj}
For any prime number $p$ and any integer $k \geq 1$, there exists a constant $c = c(p,k) > 0$ such that $\nu_p(H(n,k)) < -c \log n$ for all sufficiently large integers $n$.
\end{conj}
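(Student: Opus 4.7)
The conjecture is substantially stronger than Theorem~\ref{thm:ubound}: it demands the bound $\nu_p(H(n,k)) < -c \log n$ for \emph{all} sufficiently large $n$, rather than only for $n$ whose base-$p$ expansion begins with that of $k-1$. The natural plan is to induct on $k$, with base case $k=1$ being the pure harmonic number statement $\nu_p(H_n) < -c\log n$ for large $n$. Unfortunately this base case is itself at least as strong as the famous unresolved conjecture of Eswarathasan and Levine~\cite{MR1129989} that $\mathcal{J}_p$ is finite: indeed, our conjecture with $k=1$ forces $\nu_p(H_n) < 0$ for all large $n$, which is exactly the finiteness of $\mathcal{J}_p$. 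Consequently, a realistic plan must either attack this base case head-on, or take a quantitative version of the finiteness of $\mathcal{J}_p$ as an input.

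Granting such an input, for the inductive step $k-1 \to k$ (with $k \geq 2$) the plan is to combine the recurrence $H(n, k) = H(n-1, k) + H(n-1, k-1)/n$ with the generating identity $\prod_{i=1}^n (1+x/i) = \sum_k H(n,k) x^k$ and to analyse its Newton polygon over $\mathbb{Q}_p$. The Newton polygon of $\prod_{i=1}^n(x+i)$ depends only on the multiset $\{\nu_p(i) : 1 \leq i \leq n\}$ and at each of its vertices it gives an exact equality for $\nu_p(s(n+1, k+1))$; combined with~(\ref{equ:Hnksn1k1}), this yields exact values of $\nu_p(H(n,k))$ at those $k$. For $k+1$ not landing on a vertex, the plan is a perturbation argument: shift $n$ to a nearby $n' = n + O_k(1)$ for which $k+1$ \emph{is} a vertex of the shifted polygon, and control the small difference $H(n,k) - H(n',k)$ via iterated applications of the recurrence and the inductive hypothesis at level $k-1$. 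This is essentially the mechanism underlying Theorem~\ref{thm:ubound}, now extended to an unrestricted range of $n$.

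The main obstacle is precisely what Theorem~\ref{thm:ubound} leaves open: ruling out the atypical $n$ for which $\nu_p(H(n,k))$ happens to be much larger (i.e., much less negative) than the generic value predicted by the Newton polygon. Such $n$ correspond to unexpected cancellations in the sum $H(n,k) = \sum 1/(i_1 \cdots i_k)$, and no technique currently at hand excludes them for arbitrary individual $n$. A more tractable intermediate target is to establish the conjecture on a set of $n$ of positive (or even full) logarithmic density, which should be extractable from a refinement of the density estimates behind Theorem~\ref{thm:ubound} together with an averaging argument over nearby shifts; the full conjecture, however, appears to demand genuinely new tools beyond those employed here.
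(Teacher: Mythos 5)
The statement you are asked to prove is labelled a \emph{conjecture} in the paper: the authors explicitly pose it as an open problem and offer no proof of it, so there is nothing in the paper to compare your argument against. Your submission is, by its own admission, not a proof but a research plan, and its final paragraph concedes that the central difficulty --- ruling out atypical $n$ for which unexpected cancellation makes $\nu_p(H(n,k))$ much less negative than the generic Newton-polygon prediction --- is not overcome. That concession is the gap, and it is total: no step of the proposed induction is actually carried out, and the ``input'' you propose to grant yourself for the base case is itself an open problem. Indeed, your diagnosis there is correct and worth making explicit: since the conjecture is stated for all $k \geq 1$, the case $k = 1$ asserts $\nu_p(H_n) < -c\log n < 0$ for all sufficiently large $n$, which forces $\mathcal{J}_p$ to be finite; hence any proof of the conjecture would resolve the Eswarathasan--Levine conjecture, which the paper notes is still open despite the partial results of Boyd and Sanna. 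This alone shows the conjecture is out of reach of the techniques in the paper.

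Two smaller inaccuracies in the plan are worth flagging. First, your claim that the Newton-polygon perturbation argument is ``essentially the mechanism underlying Theorem~\ref{thm:ubound}'' does not match the paper: that theorem is proved by constructing the $p$-tree $\mathcal{T}_p(k)$, showing via Lemmas~\ref{lem:harm} and~\ref{lem:cpicong} that its girth is less than $p^{0.835}$, and counting the integers $n$ that never reach a leaf; no Newton polygon or shift $n \mapsto n'$ appears. Second, the intermediate target you propose (the bound on a set of positive logarithmic density) would still only be a density statement of the same flavour as Theorem~\ref{thm:ubound} restricted to the special digit condition on $n$, and removing that digit condition is itself a nontrivial obstacle your plan does not address. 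The recurrence $H(n,k) = H(n-1,k) + H(n-1,k-1)/n$ you invoke is correct, but nothing in the proposal extracts from it the uniform-in-$n$ control that the conjecture demands.
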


\section{Notation and Main results}

Before state our results, we need to introduce some notation and definition. 
For any prime number $p$, we write
\begin{equation}\label{equ:theyaredigits}
\langle a_0, \ldots, a_v \rangle_p := \sum_{i = 0}^v a_i p^{v - i}, \text{ where } a_0, \ldots, a_v \in \{0, \ldots, p - 1\}, \; a_0 \neq 0 ,
\end{equation}
to denote a base $p$ representation.
In particular, hereafter, the restrictions of (\ref{equ:theyaredigits}) on $a_0, \ldots, a_v$ will be implicitly assumed any time we will write something like $\langle a_0, \ldots, a_v \rangle_p$.

We call \emph{$p$-tree of root $\langle a_0, \ldots, a_v \rangle_p$} a set of positive integers $\mathcal{T}$ such that:
\begin{enumerate}[label={\rm (\textsc{t}\arabic{*})}]
\item\label{item:t1} $\langle a_0, \ldots, a_v \rangle_p \in \mathcal{T}$;
\item\label{item:t2} If $\langle b_0, \ldots, b_u \rangle_p \in \mathcal{T}$ then $u \geq v$ and $b_i = a_i$ for $i = 0,\ldots,v$;
\item\label{item:t3} If $\langle b_0, \ldots, b_u \rangle_p \in \mathcal{T}$ and $u > v$ then $\langle b_0, \ldots, b_{u-1} \rangle_p \in \mathcal{T}$.
\end{enumerate}
Moreover, $\langle b_0, \ldots, b_u \rangle_p$ is a \emph{leaf} of $\mathcal{T}$ if $u > v$ and $\langle b_0, \ldots, b_{u-1} \rangle_p \in \mathcal{T}$ but $\langle b_0, \ldots, b_u \rangle_p \notin \mathcal{T}$.
The set of all leaves of $\mathcal{T}$ is denoted by $\mathcal{T}^\star$.
Finally, the \emph{girth} of $\mathcal{T}$ is the greatest integer $g$ such that for all $\langle b_0, \ldots, b_u \rangle_p \in \mathcal{T}$ we have $\langle b_0, \ldots, b_u, c \rangle_p \in \mathcal{T}$ for at least $g$ values of $c \in \{0, \ldots, p - 1\}$.

We are ready to state our results about the $p$-adic valuation of $H(n,k)$. 

\begin{thm}\label{thm:tree}
Let $p$ be a prime number and $k = \langle e_0, \ldots, e_t \rangle_p + 1 \ge 2$.
Then there exist a $p$-tree $\mathcal{T}_p(k)$ of root $k - 1$ and a nonnegative integer $W_p(k)$ such that for any integers $n = \langle e_0, \ldots, e_t, d_{t+1}, \ldots, d_s \rangle_p$ and $r \in [t + 1, s]$ we have:
\begin{enumerate}[label={\rm (\roman{*})}]
\item If $\langle e_0, \ldots, e_t, d_{t+1}, \ldots, d_r \rangle_p \in \mathcal{T}_p(k)$ then $\nu_p(H(n,k)) \geq W_p(k) + r - ks + 1$;
\item If $\langle e_0, \ldots, e_t, d_{t+1}, \ldots, d_r \rangle_p \in \mathcal{T}_p(k)^\star$ then $\nu_p(H(n,k)) = W_p(k) + r - ks$.
\end{enumerate}
Moreover, the girth of $\mathcal{T}_p(k)$ is less than $p^{0.835}$.
In particular, the girth of $\mathcal{T}_2(k)$ is equal~to~$1$.
\end{thm}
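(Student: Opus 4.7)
The plan is to exploit the multiplicative factorization of the generating function $f_n(x) := \prod_{i=1}^n (1+x/i)$, so that $H(n,k) = [x^k] f_n(x)$. Separating the indices divisible by $p$ from the rest gives
\[
f_n(x) = \tilde{f}_n(x) \cdot f_{\lfloor n/p \rfloor}(x/p), \qquad \tilde f_n(x) := \prod_{\substack{1 \le i \le n \\ p \nmid i}}(1+x/i),
\]
and, extracting the coefficient of $x^k$, one obtains the central recursion
\[
H(n, k) \;=\; \sum_{j=0}^k \tilde H(n, k-j)\, \frac{H(\lfloor n/p \rfloor,\, j)}{p^j},
\]
where $\tilde H(n, m) := [x^m] \tilde f_n(x)$ is automatically a $p$-adic integer. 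This expresses $H(n,k)$ in terms of the shallower values $H(\lfloor n/p \rfloor, j)$ for $j \le k$, and is the workhorse of the whole argument.

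Next, I would induct on the base-$p$ length $s$ of $n$, simultaneously constructing $\mathcal T_p(k)$ and the constant $W_p(k)$. Choose $W_p(k)$ so that the equality in~(ii) holds at the shallowest leaf, and declare $m = \langle e_0, \ldots, e_t, d_{t+1}, \ldots, d_u \rangle_p$ to lie in $\mathcal T_p(k)$ precisely when $\nu_p(H(m, k)) > W_p(k) - u(k-1)$; the axioms (T1)--(T3) then follow automatically. For the inductive step, substitute the inductive hypothesis into the recursion and use Lengyel's lower bound~(\ref{eq:Lengyel_lower_bound}) to check that every summand with $0 < j < k$ has $p$-adic valuation strictly above the generic level $W_p(k) - s(k-1)$ (the borderline case $j = k-1$ requires a slightly more delicate analysis of $\nu_p(\tilde H(n, 1))$ against the shallower data). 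The sum then collapses, modulo a sufficiently large power of $p$, to $\tilde H(n, k) + H(\lfloor n/p \rfloor, k)/p^k$, so that whenever $\lfloor n/p \rfloor$ is a descendant of a leaf the $j = k$ piece dominates and forces $\nu_p(H(n, k)) = \nu_p(H(\lfloor n/p \rfloor, k)) - k$, giving~(ii); whenever $\lfloor n/p \rfloor$ is still in the tree the two pieces can partially cancel, producing the strict inequality~(i).

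The main technical obstacle is the girth estimate. Fix an in-tree node $n' \in \mathcal T_p(k)$ and count the digits $d \in \{0, \ldots, p-1\}$ for which $n = pn' + d$ again lies in $\mathcal T_p(k)$. The key mod-$p$ identity
\[
\tilde f_n(x) \;\equiv\; (1 - x^{p-1})^{n'} \cdot G_d(x) \pmod p, \qquad G_d(x) := \prod_{i=1}^d (1+x/i),
\]
which follows from the Wilson-type congruence $\prod_{i=1}^{p-1}(1+x/i) \equiv 1 - x^{p-1} \pmod p$, makes $\tilde H(n, k) \bmod p$ an explicit function of $d$. The requirement ``$n$ still lies in $\mathcal T_p(k)$'' then translates into a congruence $\tilde H(pn'+d, k) \equiv c_{n',k} \pmod p$ for a target $c_{n',k}$ depending only on $n'$ and $k$, and bounding the number of admissible $d$ reduces to a zero-counting problem in $\mathbb F_p$. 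A careful analysis of this polynomial equation yields the explicit bound $p^{0.835}$. For $p=2$ only the digits $d \in \{0,1\}$ are possible, and the condition rules out at least one of them, pinning the girth at exactly~$1$.

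The hardest point is precisely this quantitative zero-counting: extracting the specific exponent $0.835$ demands more than the qualitative observation that ``most children are leaves''. A close second is the borderline middle term $j = k-1$, whose Lengyel-based lower bound coincides (up to an additive constant) with the generic level, so extra valuation information beyond the inductive hypothesis on $H(n', k)$ alone is needed to control it.
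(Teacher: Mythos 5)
Your starting point --- the factorization $f_n(x)=\tilde f_n(x)\,f_{\lfloor n/p\rfloor}(x/p)$ and the recursion $H(n,k)=\sum_{j=0}^k\tilde H(n,k-j)H(\lfloor n/p\rfloor,j)/p^j$ --- is correct and genuinely different from the paper, which instead expands $H(n,k)$ by sorting the factors $i_1<\cdots<i_k$ according to their $p$-adic valuations (Lemmas~\ref{lem:Apnv} and~\ref{lem:vpHnk}). But the proposal misidentifies where the content lies. Since $\nu_p(\tilde H(n,k))\ge 0$ while the critical level $W_p(k)-(k-1)s$ is very negative, the term $\tilde H(n,k)$ is irrelevant; your sum does not ``collapse to $\tilde H(n,k)+H(\lfloor n/p\rfloor,k)/p^k$''. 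For large $s$ the membership condition for a new digit $d$ is governed by the interplay between the $j=k$ term and the $j=k-1$ term $\tilde H(n,1)H(\lfloor n/p\rfloor,k-1)/p^{k-1}$, which is the only term carrying the dependence on $d$ at the critical level (via $\tilde H(pn'+d,1)-\tilde H(pn',1)\equiv H_d \bmod p$). You relegate exactly this term to a parenthetical, but for the condition on $d$ to become a nondegenerate congruence one must know that its coefficient sits at \emph{exactly} the critical valuation --- i.e.\ one needs an upper bound on $\nu_p(H(\lfloor n/p\rfloor,k-1))$, a statement of the same nature as the theorem being proved. As written the induction is circular: if that coefficient had too large a valuation, the membership condition would be independent of $d$ and the girth bound would fail. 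The paper sidesteps this by exhibiting the minimal-valuation products explicitly, so that the quantity multiplying the $d$-dependent harmonic sum is the explicit unit $\Pi_p(k)$. Relatedly, axiom (\textsc{t}3) for your valuation-defined tree is not ``automatic''; it is a nontrivial rigidity statement that must be carried through the induction.

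The second and decisive gap is the girth bound, which you assert rather than prove. Reducing to ``a congruence $\tilde H(pn'+d,k)\equiv c \bmod p$'' and ``a zero-counting problem in $\mathbb F_p$'' cannot by itself yield $p^{0.835}$: the relevant expression in $d$ has degree comparable to $p$, so generic root counting gives nothing beyond the trivial bound, and in any case $\tilde H(\cdot,k)\bmod p$ is not the right object (see above --- the $d$-dependence that matters is through the degree-one symmetric function). In the paper the exponent comes from a specific arithmetic input: the $d$-dependent quantity is the partial sum $\sum_{i=a}^{a+d}1/c_p(i)$, the number of admissible $d$ is bounded via Lemma~\ref{lem:cpicong}, which rests on the estimate $\#\{v\in[x,x+y]:H_v\equiv r \bmod p\}<\tfrac32 y^{2/3}+1$ imported from \cite{MR3486261}, combined with the observation that $d$ and $d+1$ cannot both be solutions, and finally a numerical optimization of $\log_p\!\big(\min\big(3((p-2)/2)^{2/3}+2,\lceil p/2\rceil\big)\big)$ over primes (extremal at $p=59$). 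None of this machinery appears in your sketch; you correctly flag it as the hardest point, but flagging it is not supplying it. (Also, for $p=2$ your argument only shows the girth is at most $1$; equality requires showing that exactly one digit always works.)
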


Note that the case $k=1$ has been excluded from the statement.
(As mentioned in the introduction, see~\cite{MR1341721, MR1129989, MR3486261} for results on the $p$-adic valuation of $H(n,1) = H_n$.)
Later, in Section~\ref{sec:computation}, we explain a method to effectively compute the elements of $\mathcal{T}_p(k)$ for given $p$ and $k$, and we also illustrate some examples of the results of these computations.

Lengyel~\cite[Theorem~2.5]{MR3283168} proved that for each integer $m \geq 2$ it holds
\begin{equation*}
\nu_2(s(2^m, 3)) = 2^m - 3m + 3
\end{equation*}
which, in light of identity (\ref{equ:Hnksn1k1}), is in turn equivalent to
\begin{equation}\label{equ:lengyelv2}
\nu_2(H(2^m - 1, 2)) = 4 - 2m .
\end{equation}
As an application of Theorem~\ref{thm:tree}, we give a corollary that generalizes (\ref{equ:lengyelv2}) and provides a quite precise description of $\nu_2(H(n,2))$.

\begin{cor}\label{cor:2adic}
There exists a sequence $f_0, f_1, \ldots \in \{0, 1\}$ such that for any integer $n = \langle d_0, \ldots, d_s \rangle_2 \geq 2$ we have:
\begin{enumerate}[label={\rm (\roman{*})}]
\item If $d_0 = f_0, \ldots, d_s = f_s$, then $\nu_2(H(n,2)) \geq 1 - s$;
\item If $d_0 = f_0, \ldots, d_{r-1} = f_{r - 1}$, and $d_r \neq f_r$, for some positive integer $r \leq s$, then $\nu_2(H(n,2)) = r - 2s$.
\end{enumerate}
Precisely, the sequence $f_0, f_1, \ldots$ can be computed recursively by $f_0 = 1$ and
\begin{equation}\label{equ:recf}
f_s = \begin{cases}
1 & \text{ if } \nu_2(H(\langle f_0, \ldots, f_{s-1}, 1 \rangle_2, 2)) \geq 1 - s, \\
0 & \text{ otherwise},
\end{cases}
\end{equation}
for any positive integer $s$.
In particular, $f_0 = 1$, $f_1 = 1$, $f_2 = 0$.
\end{cor}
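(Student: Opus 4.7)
The plan is to derive the corollary from Theorem \ref{thm:tree} applied to $p = 2$ and $k = 2$. Since $k - 1 = 1 = \langle 1\rangle_2$, the theorem yields a 2-tree $\mathcal{T}_2(2)$ rooted at $\langle 1\rangle_2$ with girth equal to $1$, together with a nonnegative integer $W_2(2)$. Because the alphabet has only two symbols, I read ``girth $= 1$'' as the statement that every node of $\mathcal{T}_2(2)$ has exactly one child in the tree, the other being a leaf; thus $\mathcal{T}_2(2)$ is a single infinite path. I then define $f_0 := 1$ and, for $s \geq 1$, let $f_s$ be the unique digit such that $\langle f_0, \ldots, f_s\rangle_2 \in \mathcal{T}_2(2)$.

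With this in place, parts (i) and (ii) of the corollary become direct translations of the corresponding parts of Theorem \ref{thm:tree}, conditional on $W_2(2) = 0$. Indeed, in case (i) the node $\langle f_0, \ldots, f_s\rangle_2$ lies in $\mathcal{T}_2(2)$, so part (i) of the theorem applied with $r = s$ gives $\nu_2(H(n, 2)) \geq W_2(2) + 1 - s$; in case (ii) the node $\langle f_0, \ldots, f_{r-1}, d_r\rangle_2$ is exactly the off-path sibling of $\langle f_0, \ldots, f_r\rangle_2$, hence a leaf of $\mathcal{T}_2(2)$, and part (ii) of the theorem returns $\nu_2(H(n, 2)) = W_2(2) + r - 2s$.

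To pin down $W_2(2)$ and the initial values, I evaluate at $n = 2 = \langle 1, 0\rangle_2$: a direct computation gives $\nu_2(H(2, 2)) = -1$, which is incompatible with theorem part (i) (which would otherwise force $\nu_2(H(2, 2)) \geq W_2(2) \geq 0$). Hence $\langle 1, 0\rangle_2$ is a leaf, $f_1 = 1$, and theorem part (ii) yields $W_2(2) - 1 = -1$, so $W_2(2) = 0$. Applying the same dichotomy to $n = 7 = \langle 1, 1, 1\rangle_2$, where Lengyel's identity \eqref{equ:lengyelv2} gives $\nu_2(H(7, 2)) = -2 < -1$, shows that $\langle 1, 1, 1\rangle_2$ is a leaf, hence $f_2 = 0$.

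The recursion \eqref{equ:recf} follows from the same dichotomy applied at a general step: with $N := \langle f_0, \ldots, f_{s-1}, 1\rangle_2$, theorem part (i) (the case $f_s = 1$) yields $\nu_2(H(N, 2)) \geq 1 - s$, whereas theorem part (ii) (the case $f_s = 0$, where $\langle f_0, \ldots, f_{s-1}, 1\rangle_2$ is a leaf) yields $\nu_2(H(N, 2)) = -s$; the two regimes are separated exactly by the threshold $1 - s$, which is precisely \eqref{equ:recf}. The main obstacle is the identification of ``girth equal to $1$'' with the single-path structure of $\mathcal{T}_2(2)$: this step rules out the abstract possibility that both siblings belong to $\mathcal{T}_2(2)$ at some level, which would spoil both the unique definability of $f_s$ and the exactness in part (ii). Justifying this refinement will require reading it off the explicit construction of $\mathcal{T}_p(k)$ used in the proof of Theorem \ref{thm:tree}.
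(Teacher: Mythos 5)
Your proposal is correct and follows essentially the same route as the paper: the paper's proof likewise specializes Theorem~\ref{thm:tree} to $p=2$, $k=2$, uses the fact that each node of $\mathcal{T}_2(2)$ has exactly one child to extract the sequence $(f_u)$ with $\mathcal{T}_2^{(u)}(2)=\{\langle f_0,\ldots,f_u\rangle_2\}$, and reads off (i), (ii) and the recursion \eqref{equ:recf} from the theorem (the recursion via \eqref{equ:Tprec}, which is the same dichotomy you spell out). The only real differences are that the paper obtains $W_2(2)=0$ directly from the definition $W_p(k)=U_p(k)-t-1$ rather than by your bootstrap at $n=2$, and the ``exactly one child'' refinement you rightly flag is indeed supplied by the $p=2$ case at the end of the proof of Theorem~\ref{thm:tree} (the unique solvability of the congruence \eqref{equ:longcong2} modulo $2$), which is also what the paper implicitly invokes.
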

Note that (\ref{equ:lengyelv2}) is indeed a consequence of Corollary~\ref{cor:2adic}.
In fact, on the one hand, for $m = 2$ the identity (\ref{equ:lengyelv2}) can be checked quickly.
On the other hand, for any integer $m \geq 3$ we have $2^m - 1 = \langle d_0, \ldots, d_{m-1} \rangle_2$ with $d_0 = \cdots = d_{m-1} = 1$, so that $d_0 = f_0$, $d_1 = f_1$, and $d_2 \neq f_2$, hence (\ref{equ:lengyelv2}) follows from Corollary~\ref{cor:2adic}(ii), with $s = m - 1$ and $r = 2$.

Finally, we obtain the following upper bound for $\nu_p(H(n,k))$.

\begin{thm}\label{thm:ubound}
Let $p$ be a prime number, $k = \langle e_0, \ldots, e_t \rangle_p + 1 \ge 2$, and $x \geq (k - 1)p$.
Then the inequality
\begin{equation*}
\nu_p(H(n,k)) < -(k - 1)(\log_p n - \log_p(k - 1) - 1)
\end{equation*}
holds for all $n = \langle e_0, \ldots, e_t, d_{t+1}, \ldots, d_s \rangle_p \in [(k-1)p, x]$, but at most $3x^{0.835}$ exceptions.
\end{thm}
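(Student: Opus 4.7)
The plan is to apply Theorem~\ref{thm:tree} to each $n\in[(k-1)p,x]$ with base-$p$ digits $\langle e_0,\ldots,e_t,d_{t+1},\ldots,d_s\rangle_p$, and to exploit the explicit formula from part~(ii) at leaves together with the girth bound on $\mathcal{T}_p(k)$. For such $n$, consider the walk $r\mapsto w_r:=\langle e_0,\ldots,d_r\rangle_p$ for $r=t,\ldots,s$. By Theorem~\ref{thm:tree}, exactly one of the following holds: either \emph{(Case~A)} every $w_r$ lies in $\mathcal{T}_p(k)$, or \emph{(Case~B)} there is a smallest $r^*\in[t+1,s]$ with $w_{r^*}\in\mathcal{T}_p(k)^\star$, in which case $\nu_p(H(n,k))=W_p(k)+r^*-ks$ exactly. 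The strategy is to show that Case~B with $r^*$ not too close to $s$ forces the target inequality, so only few exceptional $n$ remain to be counted.

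For Case~B, since $n<kp^{s-t}$ gives $\log_p n<s-t+\log_p k$, the target satisfies
\[
-(k-1)(\log_p n-\log_p(k-1)-1)>-(k-1)(s-t-1)-(k-1)\log_p\!\tfrac{k}{k-1}.
\]
Substituting the Case~B formula and rearranging, the desired inequality is implied by
\[
r^*<s+(k-1)(t+1)-W_p(k)-(k-1)\log_p\!\tfrac{k}{k-1},
\]
which fails only when $s-r^*$ lies in a bounded window $[0,C(p,k))$ whose length depends only on $p$ and $k$; in particular, when $W_p(k)$ is small compared with $(k-1)(t+1)$, the window is empty and Case~B contributes no exceptions. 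Consequently every exceptional $n$ lies in Case~A, or in Case~B with $s-r^*<C(p,k)$.

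To count these, write $g<p^{0.835}$ for the girth of $\mathcal{T}_p(k)$ and $S:=\lfloor\log_p x\rfloor$. The girth bound caps the number of Case~A contributions with $s+1$ digits by $g^{s-t}$, and the number of Case~B contributions with leaf at depth $r^*-t$ and prescribed last $s-r^*$ digits by at most $p\,g^{r^*-t-1}p^{s-r^*}$. Summing each over $s=t+1,\ldots,S$ produces a geometric series dominated by
\[
\tfrac{g}{g-1}\,g^{S-t}\;\le\;2\,g^{S}\;\le\;2\,(p^{0.835})^{S}\;\le\;2x^{0.835},
\]
with an analogous $O(x^{0.835})$ estimate for Case~B, yielding the target bound of $3x^{0.835}$.

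The main obstacle will be obtaining the explicit constant $3$. Case~A alone already consumes essentially $2x^{0.835}$ of the budget, so the Case~B contribution must squeeze into the remaining $x^{0.835}$. This in turn requires controlling $W_p(k)$ in relation to $(k-1)(t+1)$ so that the window width $C(p,k)$ is genuinely small, and carefully balancing the opposing factors $p^{s-r^*}$ (number of extensions of a leaf prefix) and $g^{r^*-t-1}$ (number of leaf prefixes) using precisely the constraint $g<p^{0.835}$, which is what makes the exponent $0.835$ in the final bound sharp.
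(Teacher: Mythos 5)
Your overall skeleton --- split $n$ according to whether its digit prefixes stay in $\mathcal{T}_p(k)$ forever (Case A) or first exit the tree at a leaf at position $r^*$ (Case B), apply Theorem~\ref{thm:tree}(ii) at the leaf, and count Case A via the branching of the tree --- is the same as the paper's. The genuine gap is in Case B. The paper shows that \emph{every} $n$ hitting a leaf satisfies the desired inequality, with no exceptional window at all: from $\nu_p(H(n,k)) = W_p(k) + r^* - ks$ and $r^* \le s$ one gets $\nu_p(H(n,k)) \le W_p(k) - (k-1)s$; since $W_p(k) = \sum_{v=0}^t B_p(e_0,\ldots,e_v)\,v \le t\sum_{v=0}^t B_p(e_0,\ldots,e_v) = (k-1)t$ by (\ref{equ:Bpkminus1}), and $t \le \log_p(k-1)$, $s > \log_p n - 1$, this is $< -(k-1)(\log_p n - \log_p(k-1) - 1)$. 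Your version of this computation is lossier (you lower-bound the target via $\log_p n < s-t+\log_p k$ rather than using $s>\log_p n -1$ directly, and you never estimate $W_p(k)$), which leaves you with a hypothetical window $[0,C(p,k))$ of values of $s-r^*$ where Case B might fail. That window is in fact always empty --- the missing ingredient is precisely $W_p(k)\le (k-1)t$ --- but as written you neither prove it empty nor give a viable count for it: your proposed bound $p\,g^{r^*-t-1}p^{s-r^*}$, summed over the window, carries an extra factor of order $p^{C}$ that cannot be squeezed into the remaining $x^{0.835}$ of the budget. So the proof is incomplete exactly at the step that carries the theorem's content, namely that the leaf value $W_p(k)+r^*-ks$ is always below the stated threshold.

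Two smaller points. First, what the Case A count requires is an \emph{upper} bound on the number of children of every node of $\mathcal{T}_p(k)$ (which the proof of Theorem~\ref{thm:tree} does establish to be $<p^{0.835}$ via Lemma~\ref{lem:cpicong}), not the girth, which by definition is a \emph{lower} bound on branching; ``girth $<p^{0.835}$'' alone does not cap $\#\mathcal{T}_p^{(u)}(k)$ by $p^{0.835u}$. Second, your numerical step $\frac{g}{g-1}\,g^{S-t}\le 2\,g^{S}$ fails for $p=2$, where $p^{0.835}\approx 1.78$ and $\frac{g}{g-1}\approx 2.28$; the correct accounting is $\sum_{u=1}^{\lfloor\log_p x\rfloor}p^{0.835u} < \frac{p^{0.835}}{p^{0.835}-1}\,x^{0.835} < 3x^{0.835}$, and this is where the constant $3$ actually comes from: it is consumed entirely by Case A, with Case B contributing nothing.
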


\section{Preliminaries}

Let us start by proving the identity claimed in Lemma \ref{lem:Hnksnk}.

\begin{proof}[Proof of Lemma \ref{lem:Hnksnk}]
By~\cite[Eq.~6.11]{MR1397498} and $s(n+1,0) = 0$, we have the polynomial identity
\begin{equation*}
\prod_{i = 1}^n (X + i) = \sum_{k = 0}^{n} s(n + 1, k + 1) X^k ,
\end{equation*}
hence
\begin{equation*}
1 + \sum_{k = 1}^n H(n,k) X^k = \prod_{i = 1}^n \left(\frac{X}{i} + 1\right) = \frac1{n!}\prod_{i = 1}^n (X + i) = \sum_{k = 0}^{n} \frac{s(n + 1, k + 1)}{n!} X^k
\end{equation*}
and the claim follows.
\end{proof}

From here later, let us fix a prime number $p$ and let $k = \langle e_0, \ldots, e_t \rangle_p + 1 \geq 2$ and $n = \langle d_0, \ldots, d_s \rangle_p$ be positive integers with $s \geq t + 1$ and $d_i = e_i$ for $i = 0,\ldots,t$.
For any $a_0, \ldots, a_v \in \{0, \ldots, p - 1\}$, define
\begin{equation*}
B_p(a_0, \ldots, a_v) := \langle a_0, \ldots, a_v \rangle_p - \langle a_0, \ldots, a_{v-1} \rangle_p,
\end{equation*}
where by convention $\langle a_0, \ldots, a_{v-1} \rangle_p = 0$ if $v = 0$, and also
\begin{equation*}
\mathcal{B}_p(a_0, \ldots, a_v) := \big\{c_p(i) : i = 1, \ldots, B_p(a_0, \ldots, a_v)\big\}
\end{equation*}
where $c_p(1) < c_p(2) < \cdots$ denotes the sequence of all positive integers not divisible by $p$.
Lastly, put
\begin{equation*}
\mathcal{A}_p(n, v) := \big\{m \in \{1, \ldots, n\} : \nu_p(m) = s - v \big\} ,
\end{equation*}
for each integer $v \geq 0$.
The next lemma relates $\mathcal{A}_p(n, v)$ and $\mathcal{B}_p(d_0, \ldots, d_v)$.

\begin{lem}\label{lem:Apnv}
For each nonnegative integer $v \leq s$, we have
\begin{equation*}
\mathcal{A}_p(n, v) = \big\{j p^{s - v} : j \in \mathcal{B}_p(d_0, \ldots, d_v) \big\} .
\end{equation*}
In particular, $\#\mathcal{A}_p(n, v) = B_p(d_0, \ldots, d_v)$ and $\mathcal{A}_p(n, v)$ depends only on $p,s,d_0, \ldots, d_v$.
\end{lem}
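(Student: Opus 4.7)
The plan is to directly parametrize $\mathcal{A}_p(n,v)$ by factoring out the guaranteed power of $p$ and then identifying the set of admissible cofactors via the base-$p$ expansion of $n$.

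First, I would observe that $m \in \mathcal{A}_p(n,v)$ if and only if $m = j p^{s-v}$ for some positive integer $j$ with $p \nmid j$ satisfying $j p^{s-v} \leq n$. Since $j$ is an integer, the last inequality is equivalent to $j \leq \lfloor n / p^{s-v} \rfloor$, so everything reduces to computing this floor. Writing $n = \sum_{i=0}^s d_i p^{s-i}$ and splitting the sum at $i = v$, the contribution from $i > v$ lies in $[0,1)$, hence
\begin{equation*}
\left\lfloor \frac{n}{p^{s-v}} \right\rfloor \;=\; \sum_{i=0}^{v} d_i p^{v-i} \;=\; \langle d_0, \ldots, d_v \rangle_p.
\end{equation*}

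Next, the admissible cofactors $j$ are precisely the positive integers not divisible by $p$ and not exceeding $\langle d_0, \ldots, d_v \rangle_p$; by the definition of the sequence $c_p(1) < c_p(2) < \cdots$, these form an initial segment of that sequence. To identify its length, I would use that the count of positive integers up to $N$ that are coprime to $p$ equals $N - \lfloor N/p \rfloor$. Taking $N = \langle d_0, \ldots, d_v \rangle_p$ and observing that $\lfloor N/p \rfloor = \langle d_0, \ldots, d_{v-1}\rangle_p$ (with the stated convention when $v = 0$), this count is exactly $B_p(d_0, \ldots, d_v)$. Therefore the set of admissible $j$ is $\{c_p(1), \ldots, c_p(B_p(d_0, \ldots, d_v))\} = \mathcal{B}_p(d_0, \ldots, d_v)$, which yields the desired description of $\mathcal{A}_p(n,v)$. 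The two further assertions — the cardinality $\#\mathcal{A}_p(n,v) = B_p(d_0, \ldots, d_v)$ and the dependence only on $p$, $s$ and the first $v+1$ digits of $n$ — are immediate from the explicit formula, since $B_p$ and $\mathcal{B}_p$ depend only on $d_0, \ldots, d_v$ and the factor $p^{s-v}$ depends only on $p$ and $s$.

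There is no serious obstacle here; the argument is a straightforward base-$p$ bookkeeping. The only point that requires a little care is the boundary case $v = 0$, where one must invoke the stated convention $\langle a_0, \ldots, a_{v-1}\rangle_p = 0$ so that $B_p(d_0) = d_0$ and $\lfloor d_0/p\rfloor = 0$ agree, but this is purely a notational matter rather than a genuine difficulty.
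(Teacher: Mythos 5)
Your proof is correct and follows essentially the same route as the paper's: identify the elements of $\mathcal{A}_p(n,v)$ as $jp^{s-v}$ with $p\nmid j$, reduce the membership condition to a floor computation on the base-$p$ expansion, and recognize the admissible cofactors as an initial segment of the sequence $c_p(1)<c_p(2)<\cdots$. The only cosmetic difference is that you count via $N-\lfloor N/p\rfloor$ with $N=\lfloor n/p^{s-v}\rfloor$, whereas the paper writes the same quantity directly as $\lfloor n/p^{s-v}\rfloor-\lfloor n/p^{s-v+1}\rfloor$.
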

\begin{proof}
For $m \in \{1, \ldots, n\}$, we have $m \in \mathcal{A}_p(n, v)$ if and only if $p^{s-v} \mid n$ but $p^{s-v+1} \nmid n$.
Therefore,
\begin{align*}
\# \mathcal{A}_p(n, v) &= \left\lfloor \frac{n}{p^{s-v}}\right\rfloor - \left\lfloor \frac{n}{p^{s-v+1}}\right\rfloor = \left\lfloor \sum_{i = 0}^s d_i p^{v - i} \right\rfloor - \left\lfloor \sum_{i = 0}^s d_i p^{v - i - 1} \right\rfloor \\
&= \sum_{i = 0}^v d_i p^{v - i} - \sum_{i = 0}^{v - 1} d_i p^{v - i - 1} = \langle d_0, \ldots, d_v \rangle_p - \langle d_0, \ldots, d_{v-1} \rangle_p \\
&= B_p(d_0, \ldots, d_v) ,
\end{align*}
and
\begin{align*}
\mathcal{A}_p(n, v) &= \big\{c_p(i) p^{s - v} : i = 1,\ldots,\#\mathcal{A}_p(n,v) \big\} \\
&= \big\{c_p(i) p^{s - v} : i = 1,\ldots,B_p(d_0,\ldots,d_v) \big\} \\
&= \big\{j p^{s - v} : j \in \mathcal{B}_p(d_0, \ldots, d_v) \big\} ,
\end{align*}
as claimed.
\end{proof}

Before stating the next lemma, we need to introduce some additional notation.
First, we define
\begin{equation*}
\mathcal{C}_p(n, k) := \bigcup_{v = 0}^t \mathcal{A}_p(n, v) \;\;\text{ and }\;\; \Pi_p(k) := \prod_{j \in \mathcal{C}_p(n, k)} \frac1{\free_p(j)} ,
\end{equation*}
where $\free_p(m) := m / p^{\nu_p(m)}$ for any positive integer $m$.
Note that, since $d_i = e_i$ for $i=0,\ldots,t$, from Lemma~\ref{lem:Apnv} it follows easily that indeed $\Pi_p(k)$ depends only on $p$ and $k$, and not on $n$. 
Then we put
\begin{equation*}
U_p(k) := \sum_{v = 0}^t B_p(e_0, \ldots, e_v) v + t + 1 ,
\end{equation*}
while, for $a_0, \ldots, a_{t + v + 1} \in \{0, \ldots, p - 1\}$, with $v \geq 0$ and $a_i = e_i$ for $i = 0, \ldots, t$, we set
\begin{equation*}
H_p^\prime(a_0, \ldots, a_{t+v}) := \sum_{\substack{0 \leq v_1, \ldots, v_k \leq t + v \\ v_1 + \cdots + v_k = U_p(k) + v}} \sum_{\substack{j_1 / p^{v_1} < \cdots < j_k / p^{v_k} \\ j_1 \in \mathcal{B}_p(a_0, \ldots, a_{v_1}), \ldots, j_k \in \mathcal{B}_p(a_0, \ldots, a_{v_k})}} \frac1{j_1 \cdots j_k}
\end{equation*}
and
\begin{equation*}
H_p(a_0, \ldots, a_{t+v+1}) := H_p^\prime(a_0, \ldots, a_{t+v}) + \Pi_p(k) \sum_{j \in \mathcal{B}_p(a_0, \ldots, a_{t+v+1})} \frac1{j} .
\end{equation*}
Note that $\nu_p(H_p(a_0, \ldots, a_{t+v+1})) \geq 0$, this fact will be fundamental later.

The following lemma gives a kind of $p$-adic expansion for $H(n,k)$.
We use $O(p^v)$ to denote a rational number with $p$-adic valuation greater than or equal to $v$.

\begin{lem}\label{lem:vpHnk}
We have
\begin{equation*}
H(n, k) = \sum_{v = 0}^{s - t - 1} H_p(d_0, \ldots, d_{t + v + 1})\cdot p^{v - ks + U_p(k)} + O\big(p^{s - t - ks + U_p(k)}\big) .
\end{equation*}
\end{lem}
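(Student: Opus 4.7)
The plan is to reorganize the sum defining $H(n,k)$ according to the $p$-adic valuations of the selected indices. By Lemma~\ref{lem:Apnv} each $i \in \{1, \ldots, n\}$ admits a unique parametrization $i = j\, p^{s - v}$ with $v \in \{0, \ldots, s\}$ and $j \in \mathcal{B}_p(d_0, \ldots, d_v)$, so $\nu_p(i) = s - v$ and $j$ is a unit at $p$. Using the equivalence $i_\ell < i_{\ell+1} \Leftrightarrow j_\ell / p^{v_\ell} < j_{\ell+1} / p^{v_{\ell+1}}$, I would rewrite
\begin{equation*}
H(n, k) \;=\; \sum_{\sigma \geq 0} p^{\sigma + U_p(k) - ks}\, S_\sigma,
\end{equation*}
where $S_\sigma$ collects the inner sums $\sum 1/(j_1 \cdots j_k)$ over all admissible tuples with $v_1 + \cdots + v_k = U_p(k) + \sigma$, so that $\nu_p(S_\sigma) \geq 0$.

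The combinatorial key is the identity $|\mathcal{C}_p(n, k)| = \sum_{v=0}^t B_p(d_0, \ldots, d_v) = \langle e_0, \ldots, e_t \rangle_p = k - 1$, which is immediate from Lemma~\ref{lem:Apnv} together with the hypothesis $d_i = e_i$ for $i \leq t$. Since any $k$-subset of $\{1, \ldots, n\}$ must contain at least one element outside $\mathcal{C}_p(n, k)$, i.e.\ with $v_\ell \geq t + 1$, a direct minimization shows $v_1 + \cdots + v_k \geq U_p(k)$, with equality precisely when the subset equals $\mathcal{C}_p(n, k) \cup \{x\}$ for some $x \in \mathcal{A}_p(n, t + 1)$. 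Consequently $\sigma$ really starts at $0$, and all contributions with $\sigma \geq s - t$ have $p$-adic valuation $\geq s - t - ks + U_p(k)$, matching the claimed error.

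For each $\sigma \in \{0, 1, \ldots, s - t - 1\}$, I would split $S_\sigma$ into (a) tuples satisfying $v_\ell \leq t + \sigma$ for every $\ell$, which directly reproduce $H_p^\prime(d_0, \ldots, d_{t + \sigma})$ by inspection of its definition; and (b) tuples with some $v_\ell > t + \sigma$. In case (b), say $v_k \geq t + \sigma + 1$, the remaining $k - 1$ indices must satisfy $v_1 + \cdots + v_{k-1} \leq U_p(k) - t - 1$, which by the minimization above forces $\{i_1, \ldots, i_{k-1}\} = \mathcal{C}_p(n, k)$ and $v_k = t + \sigma + 1$. A direct computation, using $\sum_{i \in \mathcal{C}_p(n,k)} \nu_p(i) = s(k - 1) - U_p(k) + t + 1$ and $\prod_{i \in \mathcal{C}_p(n,k)} 1/\free_p(i) = \Pi_p(k)$, shows that this contributes exactly $p^{\sigma + U_p(k) - ks} \cdot \Pi_p(k) \sum_{j \in \mathcal{B}_p(d_0, \ldots, d_{t + \sigma + 1})} 1/j$. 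Adding (a) and (b) yields $H_p(d_0, \ldots, d_{t + \sigma + 1})$.

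The main obstacle is ruling out tuples in which two or more $v_\ell$ exceed $t + \sigma$. If $v_{k-1}, v_k \geq t + \sigma + 1$, then the remaining $k - 2$ entries would satisfy $\sum v_\ell \leq U_p(k) - 2t - \sigma - 2$; however, the minimum of $\sum v_\ell$ over any $(k-2)$-subset of $\{1, \ldots, n\}$ equals $U_p(k) - t - 1 - t^\star$, obtained by excluding from $\mathcal{C}_p(n, k)$ the element whose $v$-coordinate $t^\star \leq t$ is maximal, and this strictly exceeds $U_p(k) - 2t - \sigma - 2$. This sharp capacity estimate for $\mathcal{C}_p(n, k)$ is precisely what makes the dichotomy rigorous and allows the expansion to be read off term by term.
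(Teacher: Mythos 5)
Your proposal is correct and follows essentially the same route as the paper: both decompose $H(n,k)$ by the total valuation $\nu_p(i_1\cdots i_k)$, use $\#\mathcal{C}_p(n,k)=k-1$ to identify $V_p(n,k)=ks-U_p(k)$, and establish the same dichotomy (all $v_\ell\le t+\sigma$, giving $H_p'$, versus the tuple $\mathcal{C}_p(n,k)\cup\{x\}$ with $x\in\mathcal{A}_p(n,t+\sigma+1)$, giving the $\Pi_p(k)$ term). Your explicit capacity estimate ruling out two indices with $v_\ell>t+\sigma$ is a welcome elaboration of a step the paper passes over with ``only two cases are possible.''
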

\begin{proof}
Clearly, we can write
\begin{equation*}
H(n, k) = \sum_{v = 0}^{V_p(n, k)} J_p(n, k, v) \cdot p^{v - V_p(n, k)} ,
\end{equation*}
where
\begin{equation*}
V_p(n, k) := \max\!\big\{\nu_p(i_1 \cdots i_k) : 1 \leq i_1 < \cdots < i_k \leq n \big\} ,
\end{equation*}
and
\begin{equation*}
J_p(n, k, v) := \sum_{\substack{1 \leq i_1 < \cdots < i_k \leq n \\ \nu_p(i_1 \cdots i_k) = V_p(n, k) - v}} \frac1{\free_p(i_1 \cdots i_k)} ,
\end{equation*}
for each nonnegative integer $v \leq V_p(n,k)$.

We shall prove that $V_p(n, k) = ks - U_p(k)$.
On the one hand, we have
\begin{align}\label{equ:Bpkminus1}
\sum_{v = 0}^t B_p(e_0, \ldots, e_v) &= \sum_{v = 0}^t\big(\langle e_0, \ldots, e_v\rangle_p - \langle e_0, \ldots, e_{v-1}\rangle_p\big) \\
&= \langle e_0, \ldots, e_t\rangle_p = k - 1 . \nonumber
\end{align}
On the other hand, by (\ref{equ:Bpkminus1}) and thanks to Lemma~\ref{lem:Apnv}, we obtain
\begin{align}\label{equ:Cpnk}
\# \mathcal{C}_p(n,k) &= \sum_{v = 0}^t \# \mathcal{A}_p(n, v) = \sum_{v = 0}^t B_p(e_0, \ldots, e_v) = k - 1.
\end{align}
Hence, in order to maximize $\nu_p(i_1 \cdots i_k)$ for positive integers $i_1 < \cdots < i_k \leq n$, we have to choose $i_1, \ldots, i_k$ by picking all the $k - 1$ elements of $\mathcal{C}_p(n, k)$ and exactly one element from $\mathcal{A}_p(n, t + 1)$.
Therefore, using again (\ref{equ:Bpkminus1}) and Lemma~\ref{lem:Apnv}, we get
\begin{align}\label{equ:Vpnk}
V_p(n, k) &= \sum_{v = 0}^t \#\mathcal{A}_p(n, v) (s - v) + (s - t - 1) \\
&= \sum_{v = 0}^t B_p(e_0, \ldots, e_v) (s - v) + (s - t - 1) \nonumber\\
&= \left(\sum_{v = 0}^t B_p(e_0, \ldots, e_v) + 1\right) \! s - U_p(k) \nonumber\\
&= ks - U_p(k) , \nonumber
\end{align}
as desired.

Similarly, if $\nu_p(i_1 \cdots i_k) = V_p(n,k) - v$, for some positive integers $i_1 < \cdots < i_k \leq n$ and some nonnegative integer $v \leq s - t - 1$, then only two cases are possible: $\nu_p(i_1), \ldots, \nu_p(i_k) \geq s - t - v$; or $i_1, \ldots, i_k$ consist of all the $k - 1$ elements of $\mathcal{C}_p(n, k)$ and one element of $\mathcal{A}_p(n, t + v + 1)$.
As~a~consequence,
\begin{equation}\label{equ:Jpnkv}
J_p(n, k, v) = \sum_{\substack{1 \leq i_1 < \cdots < i_k \leq n \\ \nu_p(i_1 \cdots i_k) = V_p(n, k) - v \\ \nu_p(i_1), \ldots, \nu_p(i_k) \geq s - t - v}} \frac1{\free_p(i_1 \cdots i_k)} + \Pi_p(k) \sum_{i \in \mathcal{A}_p(n, t + v + 1)} \frac1{\free_p(i)} ,
\end{equation}
for all nonnegative integers $v \leq s - t - 1$.

By putting $v_\ell := s - \nu_p(i_\ell)$ and $j_\ell := \free_p(i_\ell)$ for $\ell = 1,\ldots,k$, the first sum of (\ref{equ:Jpnkv}) can be rewritten as
\begin{align*}
&\phantom{mmm}\sum_{\substack{0 \leq v_1, \ldots, v_k \leq t + v \\ (s - v_1) + \cdots + (s - v_k) = V_p(n,k) - v}} \sum_{\substack{i_1 < \cdots < i_k \\ i_1 \in \mathcal{A}_p(n, v_1), \ldots, i_k \in \mathcal{A}_p(n, v_k)}} \frac1{\free_p(i_1 \cdots i_k)} \\
&= \sum_{\substack{0 \leq v_1, \ldots, v_k \leq t + v \\ v_1 + \cdots + v_k = U_p(k) + v}} \sum_{\substack{j_1 / p^{v_1} < \cdots < j_k / p^{v_k} \\ j_1 \in \mathcal{B}_p(d_0, \ldots, d_{v_1}), \ldots, j_k \in \mathcal{B}_p(d_0, \ldots, d_{v_k})}} \frac1{j_1 \cdots j_k} = H^\prime_p(d_0, \ldots, d_{t+v}) ,
\end{align*}
where we have also made use of (\ref{equ:Vpnk}) and Lemma~\ref{lem:Apnv}, hence
\begin{equation}\label{equ:JpnkvHp}
J_p(n, k, v) = H_p(d_0, \ldots, d_{t + v + 1}) ,
\end{equation}
for any nonnegative integer $v \leq s - t - 1$.

At this point, being $s > t$, by (\ref{equ:Vpnk}) it follows that $V_p(n, k) > s - t - 1$, hence
\begin{equation}\label{equ:Hnkvst1}
H(n, k) = \sum_{v = 0}^{s - t - 1} J_p(n, k, v) \cdot p^{v - ks + U_p(k)} + O\big(p^{s - t - ks + U_p(k)}\big) ,
\end{equation}
since clearly $\nu_p(J_p(n, k, v)) \geq 0$ for any nonnegative integer $v \leq V_p(n, k)$.

In conclusion, the claim follows from (\ref{equ:JpnkvHp}) and (\ref{equ:Hnkvst1}).
\end{proof}

Finally, we need two lemmas about the number of solutions of some congruences.
For rational numbers $a$ and $b$, we write $a \equiv b \bmod p$ to mean that $\nu_p(a - b) > 0$.

\begin{lem}\label{lem:harm}
Let $r$ be a rational number and let $x, y$ be positive integers with $y < p$.
Then the number of integers $v \in [x, x+y]$ such that $H_v \equiv r \bmod p$ is less than $\frac{3}{2}y^{2/3} + 1$.
\end{lem}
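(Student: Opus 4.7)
The plan is to convert the problem into counting how often a partial harmonic sum hits a fixed residue modulo $p$. Put $P_i := H_{x+i} - H_x = \sum_{j=1}^{i} 1/(x+j)$, so that $H_v \equiv r \bmod p$ becomes $P_{v-x} \equiv r - H_x \bmod p$, and the task is to count $i \in \{0, 1, \ldots, y\}$ for which $P_i$ takes a fixed value in $\mathbb{F}_p$. First I would reduce to the case where the interval $[x+1, x+y]$ contains no multiple of $p$. Since $y < p$ there is at most one such multiple $w$; if the counted set contained two elements $v_1 < v_2$ with $v_1 < w \leq v_2$, then $H_{v_2} - H_{v_1} = \sum_{j = v_1+1}^{v_2} 1/j$ would have $p$-adic valuation equal to $-\nu_p(w) < 0$, contradicting $H_{v_1} \equiv H_{v_2} \bmod p$. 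Therefore the counted set lies in $[x, w-1]$ or in $[w, x+y]$, in either case a subinterval whose internal ``difference'' intervals avoid multiples of $p$.

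Enumerate the $M$ solutions as $v_1 < \cdots < v_M$ and form the blocks $B_t := [v_{t-1}+1, v_t]$ of length $\ell_t$ for $t = 2, \ldots, M$. These are pairwise disjoint with $\sum_t \ell_t \leq y$, and $\sum_{j \in B_t} 1/j \equiv 0 \bmod p$ for each $t$. The crucial point is a polynomial-degree bound: for a block $[k+1, k+\ell]$ with all $k+j$ coprime to $p$, clearing denominators turns $\sum_{j=1}^{\ell} 1/(k+j) \equiv 0 \bmod p$ into $Q_\ell(k) \equiv 0 \bmod p$, where
$$Q_\ell(z) := \sum_{j=1}^{\ell} \prod_{\substack{j' = 1 \\ j' \neq j}}^{\ell}(z + j').$$
Each product is a degree $\ell - 1$ polynomial with leading coefficient $1$, so $Q_\ell$ has degree exactly $\ell - 1$ with leading coefficient $\ell$; since $\ell \leq y < p$, this leading coefficient is a unit in $\mathbb{F}_p$, and hence $Q_\ell$ has at most $\ell - 1$ roots modulo $p$. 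In particular $\ell_t \geq 2$ for every $t$ (a length-$1$ block is impossible), and if $n_\ell$ denotes the number of blocks of length $\ell$, then $n_\ell \leq \ell - 1$.

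A standard balancing finishes the proof. For any integer $T \geq 1$,
$$M - 1 \;=\; \sum_{\ell \geq 2} n_\ell \;\leq\; \sum_{\ell = 2}^{T}(\ell - 1) + \frac{1}{T+1}\sum_{\ell > T} \ell\, n_\ell \;\leq\; \frac{T(T-1)}{2} + \frac{y}{T+1},$$
and the choice $T = \lfloor y^{1/3} \rfloor$ yields $M - 1 < \tfrac{3}{2}\, y^{2/3}$ (the handful of very small $y$ is absorbed by the trivial bound $M \leq y + 1$). The main obstacle is the polynomial-degree step, which requires both the reduction to a multiple-of-$p$-free subinterval (so that clearing denominators is legitimate) and the observation that the leading coefficient $\ell$ is a unit modulo $p$; once these are in place, the closing optimization is a routine AM-GM-style balancing.
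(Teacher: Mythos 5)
Your proof is correct and follows essentially the same route as the paper's: the paper disposes of this lemma by citing the case $r=0$ from \cite[Lemma~2.2]{MR3486261} and noting the argument carries over verbatim, and that cited proof is precisely your argument --- splitting off the at most one multiple of $p$, forming the blocks between consecutive solutions, bounding the number of blocks of length $\ell$ by the $\ell-1$ roots of $Q_\ell$ modulo $p$ (using that the left endpoints are distinct mod $p$ since they lie in an interval of length $y<p$), and balancing with $T=\lfloor y^{1/3}\rfloor$.
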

\begin{proof}
The case $r = 0$ is proved in \cite[Lemma~2.2]{MR3486261} and the proof works exactly in the same way even for $r \neq 0$.
\end{proof}

\begin{lem}\label{lem:cpicong}
Let $q$ be a rational number and let $a$ be a positive integer.
Then the number of $d \in \{0, \ldots, p-1\}$ such that
\begin{equation}\label{equ:cpqp}
\sum_{i = a}^{a + d} \frac1{c_p(i)} \equiv q \bmod p
\end{equation}
is less than $p^{0.835}$.
\end{lem}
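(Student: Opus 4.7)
The plan is to rewrite $\sum_{i=a}^{a+d} 1/c_p(i)$ as a difference of ordinary partial harmonic sums up to at most one correction term coming from the unique multiple of $p$ that may lie in the interval of summation, and then to invoke Lemma~\ref{lem:harm} separately on the two resulting subranges of $d$'s.

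First I would set $A := c_p(a)$, $B_d := c_p(a+d)$, and write $A = (m_0 - 1)p + r$ with $r \in \{1,\ldots,p-1\}$, so that $m_0 p$ is the unique multiple of $p$ in $(A, A+p]$. As $d$ runs through $\{0,\ldots,p-1\}$, $B_d$ traverses all elements of $[A, A+p] \setminus \{m_0 p\}$, splitting the $d$'s into a \emph{lower} range $\{0,\ldots,p-r-1\}$ (with $B_d = A+d$) and an \emph{upper} range $\{p-r,\ldots,p-1\}$ (with $B_d = m_0 p + k$, $k \in \{1,\ldots,r\}$).

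In the lower range, a direct calculation gives $\sum_{i=a}^{a+d} 1/c_p(i) = H_{A+d} - H_{A-1}$, so the congruence becomes $H_{A+d} \equiv q + H_{A-1} \pmod p$, and Lemma~\ref{lem:harm} with $y = p-r-1$ bounds the count by $\tfrac{3}{2}(p-r-1)^{2/3}+1$. In the upper range the sum equals $H_{m_0 p + k} - H_{A-1} - 1/(m_0 p)$, and the elementary identity $1/(m_0 p + j) \equiv 1/j \pmod p$ for $1 \leq j < p$ rewrites this modulo $p$ as $H_{m_0 p - 1} + H_k - H_{A-1}$; the congruence thus reduces to $H_k \equiv q + H_{A-1} - H_{m_0 p - 1} \pmod p$ on $k \in \{1,\ldots,r\}$, and Lemma~\ref{lem:harm} with $y = r-1$ bounds the count by $\tfrac{3}{2}(r-1)^{2/3}+1$.

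Summing the two bounds and using the concavity of $t \mapsto t^{2/3}$, the total count is at most $3((p-2)/2)^{2/3}+2 = O(p^{2/3})$, which is strictly smaller than $p^{0.835}$ for $p$ beyond a numerically checkable threshold. For primes below this threshold I would rely on the complementary trivial bound $\lceil p/2 \rceil$: since the increment $T_d - T_{d-1} = 1/c_p(a+d) \not\equiv 0 \pmod p$, consecutive partial sums disagree modulo $p$, and hence at most half of the $p$ values of $d$ can satisfy the congruence. A short direct verification then confirms that $\min\{\lceil p/2 \rceil,\, 3((p-2)/2)^{2/3}+2\} < p^{0.835}$ for every prime $p$; this quantitative comparison in the transition regime around $p \approx 60$ is the main obstacle, since Lemma~\ref{lem:harm} alone is not sharp enough to close the gap there.
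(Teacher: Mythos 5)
Your proposal is correct and follows essentially the same route as the paper: split the $d$-range at the unique multiple of $p$ skipped by $c_p$, apply Lemma~\ref{lem:harm} to each of the two subranges, combine the bounds via concavity into $3((p-2)/2)^{2/3}+2$, and intersect with the trivial $\lceil p/2\rceil$ bound (from consecutive partial sums differing by a unit mod $p$) before a finite numerical check with worst case near $p=59$. The only cosmetic difference is that you normalize the upper-range sums to $H_k$ with small index via $1/(m_0p+j)\equiv 1/j \bmod p$, whereas the paper applies Lemma~\ref{lem:harm} directly on the shifted interval $[m_0p+1,\,m_0p+r]$; both are equivalent.
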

\begin{proof}
It is easy to see that there exists some $h \in \{0,\ldots,p-2\}$ such that
\begin{equation*}
c_p(i) = \begin{cases}
c_p(a) + i - a & \text{ for } i = a,\ldots,a+h, \\
c_p(a) + i - a + 1 & \text{ for } i = a+h+1,\ldots,a+p-1 .
\end{cases}
\end{equation*}
Therefore, by putting $x := c_p(a)$, $y := h$, and $r := q + H_{x - 1}$ in Lemma~\ref{lem:harm}, we get that the number of $d \leq h$ satisfying (\ref{equ:cpqp}) is less than $\frac{3}{2}h^{2/3} + 1$.
Similarly, by putting $x := c_p(a) + h + 2$, $y := p - h - 2$, and
\begin{equation*}
r := q + H_{x - 1} - \sum_{i = a}^{a + h} \frac1{c_p(i)}
\end{equation*}
in Lemma~\ref{lem:harm}, we get that the number of $d \in [h + 1, p - 1]$ satisfying (\ref{equ:cpqp}) is less than $\frac{3}{2}(p - h - 2)^{2/3} + 1$.
Thus, letting $N$ be the number of $d \in \{0, \ldots, p - 1\}$ that satisfy (\ref{equ:cpqp}), we have
\begin{equation*}
N \leq \frac{3}{2}h^{2/3}+1+\frac{3}{2}(p - h - 2)^{2/3} + 1 \leq 3\!\left(\frac{p - 2}{2}\right)^{2/3} + 2 .
\end{equation*}
Furthermore, it is clear the $d$ and $d+1$ cannot both satisfy (\ref{equ:cpqp}), hence $N \leq \left\lceil p / 2 \right\rceil$.
Finally, a little computation shows that the maximum of
\begin{equation*}
\log_p\!\left(\min\!\left(3\!\left(\frac{p - 2}{2}\right)^{2/3} + 2, \left\lceil \frac{p}{2} \right\rceil\right)\right)
\end{equation*}
is obtained for $p = 59$ and is less than $0.835$, hence the claim follows.
\end{proof}

\section{Proof of Theorem~\ref{thm:tree}}

Now we are ready to prove Theorem~\ref{thm:tree}.
For any $a_0, \ldots, a_{t+u+1} \in \{0,\ldots,p-1\}$, with $u \geq 0$ and $a_i = e_i$ for $i=0,\ldots,t$, let
\begin{equation*}
\Sigma_p(a_0, \ldots, a_{t+u+1}) := \sum_{v = 0}^u H_p(a_0, \ldots, a_{t+v+1})\cdot p^v .
\end{equation*}
Furthermore, define the sequence of sets $\mathcal{T}_p^{(0)}(k), \mathcal{T}_p^{(1)}(k), \ldots$ as follows: $\mathcal{T}_p^{(0)}(k) := \left\{\langle e_0, \ldots, e_t \rangle_p\right\}$, and for any integer $u \geq 0$ put $\langle a_0, \ldots, a_{t+u+1} \rangle_p \in \mathcal{T}_p^{(u + 1)}(k)$ if and only if $\langle a_0, \ldots, a_{t+u} \rangle_p \in \mathcal{T}_p^{(u)}(k)$ and $\nu_p(\Sigma_p(a_0, \ldots, a_{t+u+1})) \geq u + 1$.
At this point, setting 
\begin{equation*}
\mathcal{T}_p(k) := \bigcup_{u=0}^\infty \mathcal{T}_p^{(u)}(k) ,
\end{equation*}
it is straightforward to see that $\mathcal{T}_p(k)$ is a $p$-tree of root $\langle e_0, \ldots, e_t\rangle_p$.
Put $W_p(k) := U_p(k) - t - 1$.

If $\langle d_0, \ldots, d_r \rangle_p \in \mathcal{T}_p(k)$ then, by the definition of $\mathcal{T}_p(k)$, we have $\nu_p(\Sigma_p(d_0, \ldots, d_r)) \geq r-t$.
Therefore, by Lemma~\ref{lem:vpHnk} it follows that $\nu_p(H(n,k)) \geq W_p(k) + r - ks + 1$, and this proves (i).

If $\langle d_0, \ldots, d_r \rangle_p \in \mathcal{T}_p(k)^\star$ then $r > t$ and $\langle d_0, \ldots, d_{r-1}\rangle \in \mathcal{T}_p(k)$ but $\langle d_0, \ldots, d_r \rangle \notin \mathcal{T}_p(k)$, so that
\begin{equation}\label{equ:vpSpd0}
\nu_p(\Sigma_p(d_0, \ldots, d_r)) \leq r - t - 1 .
\end{equation}
Now we distinguish between two cases.
If $r = t + 1$, then $\nu_p(\Sigma_p(d_0, \ldots, d_{t+1})) = 0$ and by Lemma~\ref{lem:vpHnk} we obtain $\nu_p(H(n,k)) = W_p(k) + r - ks$.
If $r > t + 1$ then by $\langle d_0, \ldots, d_{r-1}\rangle \in \mathcal{T}_p(k)$ we get that $\nu_p(\Sigma_p(d_0, \ldots, d_{r-1})) \geq r - t - 1$, which together with (\ref{equ:vpSpd0}) and
\begin{equation*}
\Sigma_p(d_0, \ldots, d_r) = \Sigma_p(d_0, \ldots, d_{r-1}) + H_p(d_0, \ldots, d_r)\cdot p^{r - t - 1}
\end{equation*}
implies that $\nu_p(\Sigma_p(d_0, \ldots, d_r)) = r - t - 1$, hence by Lemma~\ref{lem:vpHnk} we get $\nu_p(H(n,k)) = W_p(k) + r - ks$, and also (ii) is proved.

It remains only to bound the girth of $\mathcal{T}_p(k)$.
Let $u$ be a nonnegative integer and pick $\langle a_0, \ldots, a_{t + u} \rangle_p \in \mathcal{T}_p^{(u)}(k)$.
By the definition of $\mathcal{T}_p^{(u+1)}(k)$, we have $\langle a_0, \ldots, a_{t + u + 1} \rangle_p \in \mathcal{T}_p^{(u+1)}(k)$ if and only if $\nu_p(\Sigma_p(a_0, \ldots, a_{t+u+1})) \geq u + 1$, which in turn is equivalent to
\begin{align}\label{equ:longcong1}
&\sum_{v = 0}^{u-1} H_p(a_0, \ldots, a_{t+v+1})\cdot p^{v-u} + H_p^\prime(a_0, \ldots, a_{t+u}) + \Pi_p(k) \sum_{j \in \mathcal{B}_p(a_0, \ldots, a_{t+u+1})} \frac1{j} \\
&\equiv \sum_{v = 0}^u H_p(a_0, \ldots, a_{t+v+1})\cdot p^{v-u} \equiv 0 \bmod p \nonumber .
\end{align}
Using the definition of $\mathcal{B}_p(a_0, \ldots, a_{t+u+1})$ and the facts that
\begin{equation*}
B_p(a_0, \ldots, a_{t+u+1}) = a_{t + u + 1} + (p-1)\sum_{v = 0}^u a_v p^{u - v} ,
\end{equation*}
and $\nu_p(\Pi_p(k)) = 0$, we get that (\ref{equ:longcong1}) is equivalent to
\begin{align}\label{equ:longcong2}
&\phantom{mmm}\sum_{i = a}^{a + a_{t+u+1}} \frac1{c_p(i)} \equiv -\sum_{i = 1}^{a - 1} \frac1{c_p(i)} \nonumber\\
&-\frac1{\Pi_p(k)}\!\left(\sum_{v = 0}^{u-1} H_p(a_0, \ldots, a_{t+v+1})\cdot p^{v-u} + H_p^\prime(a_0, \ldots, a_{t+u}) \right) \bmod p ,
\end{align}
where
\begin{equation*}
a := (p-1)\sum_{v = 0}^u a_v p^{u - v} .
\end{equation*}
Note that both $a$ and the right-hand side of (\ref{equ:longcong2}) do not depend on $a_{t + u + 1}$.
As a consequence, by Lemma~\ref{lem:cpicong} we get that $\langle a_0, \ldots, a_{t + u + 1} \rangle_p \in \mathcal{T}_p^{(u+1)}(k)$ for less than $p^{0.835}$ values of $a_{t + u + 1} \in \{0, \ldots, p - 1\}$.
Thus the girth of $\mathcal{T}_p(k)$ is less than $p^{0.835}$.

Finally, consider the case $p = 2$.
Obviously, $1 / c_2(i) \equiv 1 \bmod 2$ for any positive integer $i$, while the right-hand side of (\ref{equ:longcong2}) is equal to $0$ or $1$ (mod $2$).
Therefore, there exists one and only one choice of $a_{t+u+1} \in \{0, 1\}$ such that (\ref{equ:longcong2}) is satisfied.
This means that the girth of $\mathcal{T}_2(k)$ is exactly $1$.

The proof is complete.

\section{The computation of $\mathcal{T}_p(k)$}\label{sec:computation}

Given $p$ and $k$, it might be interesting to effectively compute the elements of $\mathcal{T}_p(k)$.
Clearly, $\mathcal{T}_p(k)$ could be infinite --- and this is indeed the case when $p = 2$, since by Theorem~\ref{thm:tree} we know that $\mathcal{T}_2(k)$ has girth $1$ --- hence the computation should proceed by first enumerating all the elements of $\mathcal{T}_p^{(0)}(k)$, then all the elements of $\mathcal{T}_p^{(1)}(k)$, and so on.
An obvious way to do this is using the recursive definition of the $\mathcal{T}_p^{(u)}(k)$'s.
However, it is easy to see how this method is quite complicated and impractical.
A better idea is noting that, taking $r = s$ in Theorem~\ref{thm:tree}, we have
\begin{align}\label{equ:Tprec}
\mathcal{T}_p^{(u+1)}(k) = \big\{\langle a_0, \ldots, a_{t+u}, b \rangle_p :\,& \langle a_0, \ldots, a_{t+u} \rangle_p \in \mathcal{T}_p^{(u)}(k),\\
&\nu_p(H(\langle a_0, \ldots, a_{t + u}, b\rangle_p, k)) \geq W_p(k) - (k-1)s + 1 \big\} \nonumber,
\end{align}
for all integers $u \geq 0$.
Therefore, starting from $\mathcal{T}_p^{(0)}(k) = \{\langle e_0, \ldots, e_t \rangle_p\}$, formula (\ref{equ:Tprec}) gives a way to compute recursively all the elements of $\mathcal{T}_p(k)$.
In particular, if $\mathcal{T}_p(k)$ is finite, then after sufficient computation one will get $\mathcal{T}_p^{(u)}(k) = \varnothing$ for some positive integer $u$, so the method actually proves that $\mathcal{T}_p(k)$ is finite.

The authors implemented this algorithm in \textsc{SageMath}, since it allows computations with arbitrary-precision $p$-adic numbers.
In particular, they found that $\mathcal{T}_3(2),\ldots,\mathcal{T}_3(6)$ are all finite sets, with respectively $8, 24, 16, 7, 23$ elements, while the cardinality of $\mathcal{T}_3(7)$ is at least $43$.
Through these numerical experiments, it seems that, in general, $\mathcal{T}_p(k)$ does not exhibit any trivial structure (see Figures~\ref{fig:1}, \ref{fig:2}, \ref{fig:3}), hence the question of the finiteness of $\mathcal{T}_p(k)$ is probably a difficult one.

\section{Proof of Corollary~\ref{cor:2adic}}

Only for this section, let us focus on the case $p = 2$ and $k = 2$, so that $t = 0$, $e_0 = 1$, and $W_2(2) = 0$.
Thanks to Theorem~\ref{thm:tree} we know that $\mathcal{T}_2(2)$ has girth $1$.
Hence, it follows easily that there exists a sequence $f_0, f_1, \ldots \in \{0,1\}$ such that $\mathcal{T}^{(u)}_2(2) = \{\langle f_0, \ldots, f_u\rangle_2\}$ for all integers $u \geq 0$.
In particular, $f_0 = e_0 = 1$.
At this point, (i) and (ii) are direct consequences of Theorem~\ref{thm:tree}, while the recursive formula (\ref{equ:recf}) is just a special case of~(\ref{equ:Tprec}).

\section{Proof of Theorem~\ref{thm:ubound}}

It is easy to see that $\#\mathcal{T}_p^{(u)}(k) < p^{0.835 u}$, for any positive integer $u$.

On the one hand, the number of $n = \langle e_0, \ldots, e_t, d_{t+1}, \ldots, d_s \rangle_p \in [(k-1)p, x]$ such that $\langle d_0, \ldots, d_r \rangle_p \notin \mathcal{T}_p(k)^\star$ for any integer $r \in [t+1, s]$ is less than
\begin{equation*}
\sum_{u = 1}^{\lfloor \log_p x \rfloor - t} \#\mathcal{T}_p^{(u)}(k) < \sum_{u = 1}^{\lfloor \log_p x \rfloor} p^{0.835 u} < 3 x^{0.835} .
\end{equation*}

On the other hand, if $n = \langle e_0, \ldots, e_t, d_{t+1}, \ldots, d_s \rangle_p \in [(k-1)p, x]$ and $\langle d_0, \ldots, d_r \rangle_p \in \mathcal{T}_p(k)^\star$ for some integer $r \in [t + 1, s]$, then by Theorem~\ref{thm:tree}(ii) we get that
\begin{align*}
\nu_p(H(n,k)) &= W_p(k) + r - ks \leq W_p(k) - (k - 1)s = \sum_{v = 0}^t B_p(e_0, \ldots, e_v) v - (k - 1)s \\
&\leq \sum_{v = 0}^t B_p(e_0, \ldots, e_v) t - (k - 1)s < -(k - 1)(\log_p n - \log_p (k - 1) - 1) ,
\end{align*}
where we have made use of (\ref{equ:Bpkminus1}) and the inequalities $s > \log_p n - 1$ and $t \leq \log_p (k - 1)$.

\section{Figures}

\begin{figure}[h]
\centering
\includegraphics[scale=1.00]{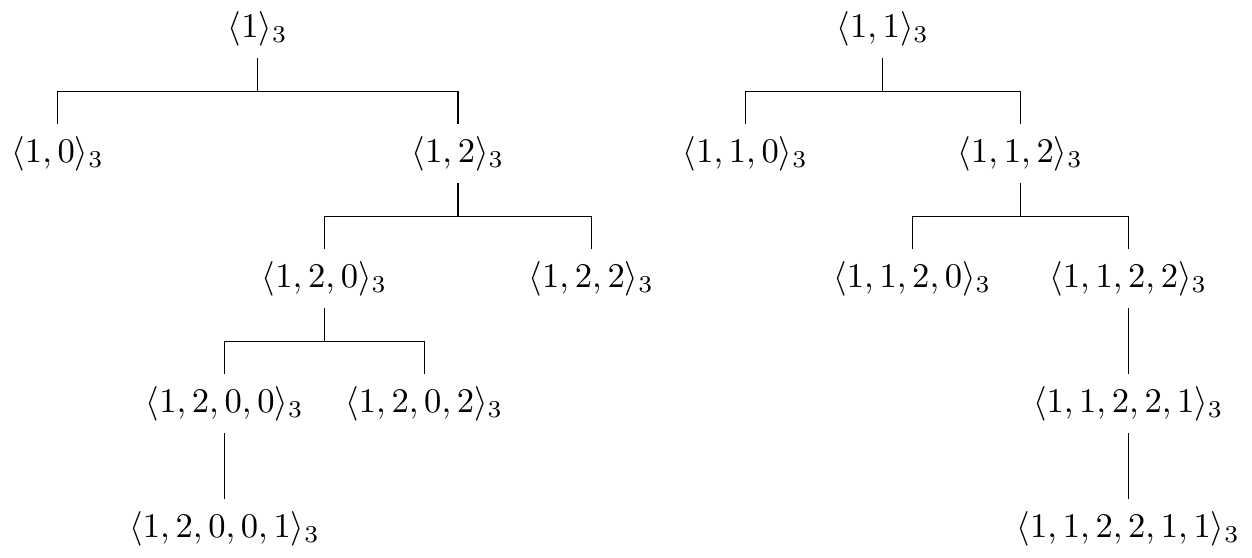}
\caption{The $8$ elements of $\mathcal{T}_3(2)$ (left tree), and the $7$ elements of~$\mathcal{T}_3(5)$~(right tree).}
\label{fig:1}
\end{figure}

\begin{figure}[h]
\centering
\includegraphics[scale=0.75]{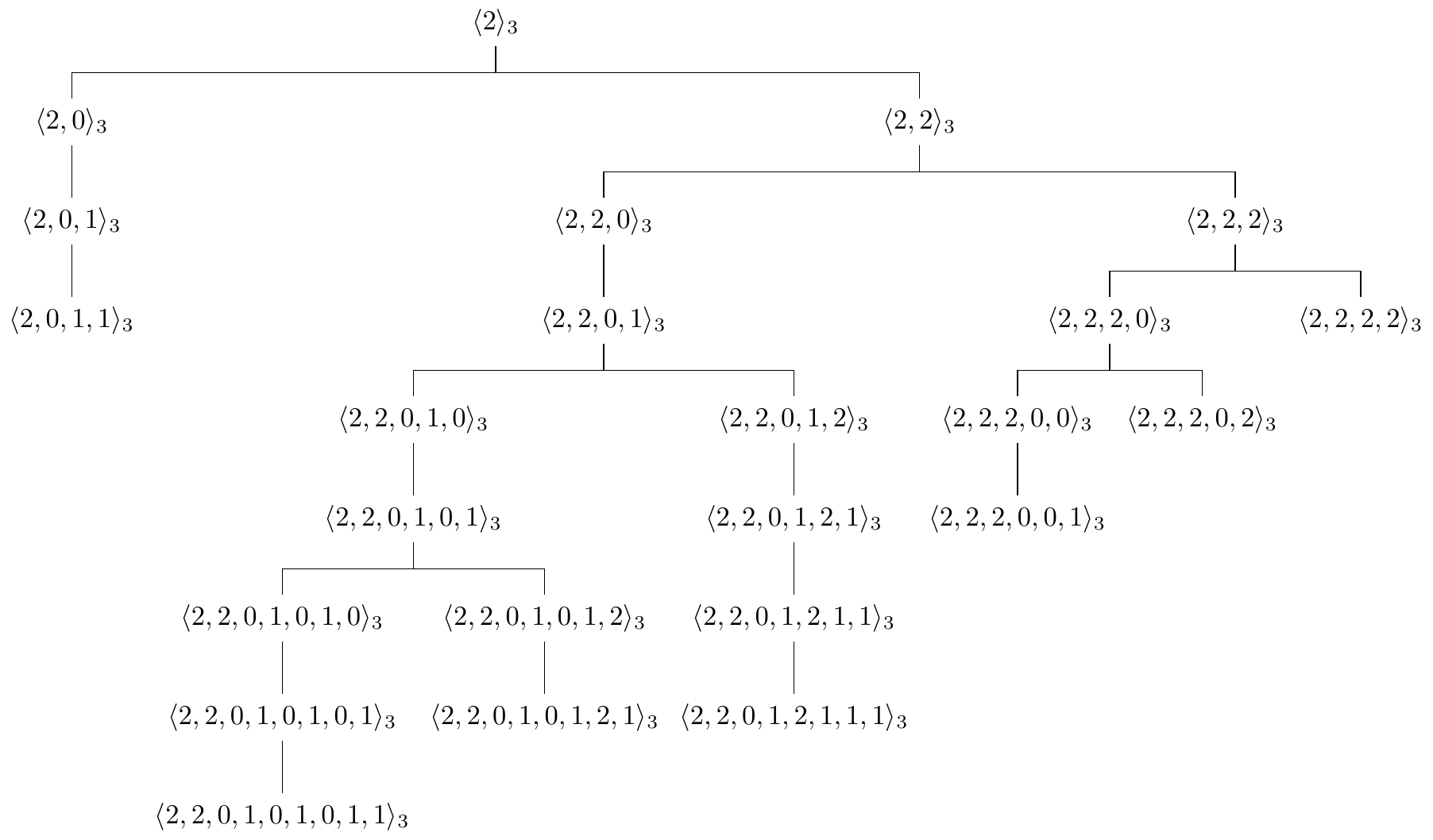}
\caption{The $24$ elements of $\mathcal{T}_3(3)$.}
\label{fig:2}
\end{figure}

\begin{figure}[h]
\centering
\includegraphics[scale=0.75]{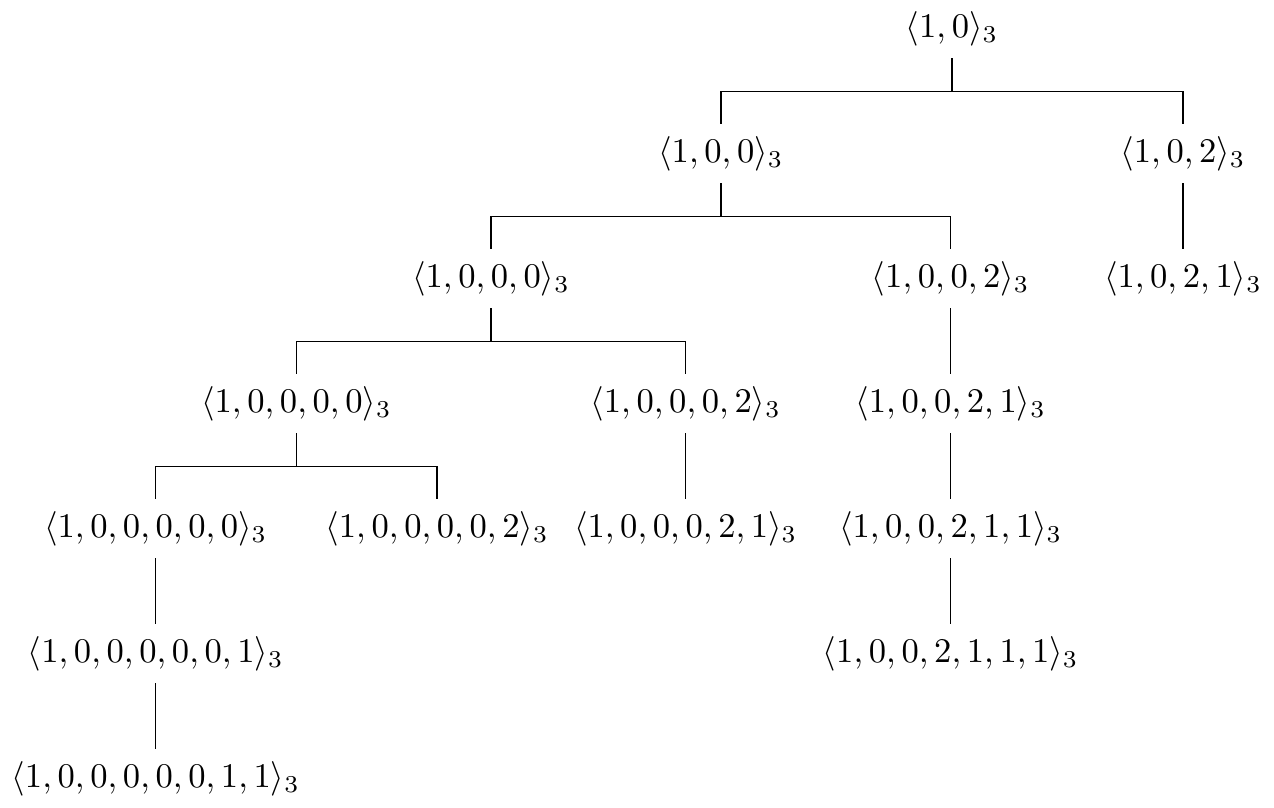}
\caption{The $16$ elements of $\mathcal{T}_3(4)$.}
\label{fig:3}
\end{figure}

\providecommand{\bysame}{\leavevmode\hbox to3em{\hrulefill}\thinspace}

\end{document}